\newtheorem{thm}{Theorem}
\newtheorem{lem}{Lemma}
\newtheorem{cor}{Corollary}
\theoremstyle{definition}
\newtheorem{dfn}{Definition}
\theoremstyle{remark}
\newtheorem{remark}{Remark}
\numberwithin{equation}{section}
\begin{document}
\title[Coefficient bounds for analytic functions]{Comprehensive subclasses
of analytic functions and coefficient bounds}
\author{Serap BULUT}
\address{Kocaeli University, Faculty of Aviation and Space Sciences,
Arslanbey Campus, 41285 Kartepe-Kocaeli, TURKEY}
\email{serap.bulut@kocaeli.edu.tr}
\subjclass[2010]{Primary 30C45, 30C80}
\keywords{Analytic functions, Coefficient bounds, Subordination.}

\begin{abstract}
In this paper, we introduce two general subclasses of analytic functions by
means of the principle of subordination and investigate the coefficient
bounds for functions in theese classes. The well-known results are obtained
as a corollary of our main results. Especially, we improve the results of
Alt\i nta\c{s} and K\i l\i \c{c} \cite{AK}.
\end{abstract}

\maketitle

\section{Definitions and Preliminaries}

Let $\mathcal{A}$ be the family of functions of the form%
\begin{equation}
f(z)=z+\sum_{n=2}^{\infty }a_{n}z^{n}  \label{1.1}
\end{equation}%
which are analytic in the open unit disk $\mathbb{D}=\left\{ z:z\in \mathbb{%
C\quad }\text{and}\mathbb{\quad }\left\vert z\right\vert <1\right\} .$

For analytic functions $f$ and $g$ with $f\left( 0\right) =g\left( 0\right) $%
, $f$ is said to be subordinate to $g$ in $\mathbb{D}$ if there exists an
analytic function $\mathfrak{h}$ on $\mathbb{D}$ such that%
\begin{equation*}
\mathfrak{h}\left( 0\right) =0,\quad \left\vert \mathfrak{h}\left( z\right)
\right\vert <1\text{\quad and\quad }f\left( z\right) =g\left( \mathfrak{h}%
\left( z\right) \right) \text{\qquad }\left( z\in \mathbb{D}\right) .
\end{equation*}%
We denote the subordination by%
\begin{equation*}
f\left( z\right) \prec g\left( z\right) \qquad \left( z\in \mathbb{D}\right)
.
\end{equation*}%
Note that if the function $g$ is univalent in $\mathbb{D}$, then we have%
\begin{equation*}
f\left( z\right) \prec g\left( z\right) \quad \left( z\in \mathbb{D}\right)
\Leftrightarrow f\left( 0\right) =g\left( 0\right) \text{\quad and\quad }%
f\left( \mathbb{D}\right) \subset g\left( \mathbb{D}\right) .
\end{equation*}

Let $\mathcal{N}$ be the class consisting of analytic and univalent
functions $\varphi :\mathbb{D}\rightarrow \mathbb{C}$ such that $\varphi (%
\mathbb{D})$ is convex with%
\begin{equation*}
\varphi (0)=1\qquad \text{and\qquad }\Re \left( \varphi \left( z\right)
\right) >0\quad \left( z\in \mathbb{D}\right) .
\end{equation*}%
By means of functions belong to the class $\mathcal{N}$ and the principle of
subordination, we consider following subclasses of analytic function class $%
\mathcal{A}$:%
\begin{eqnarray}
\mathcal{S}^{\ast }\left( \varphi \right)  &=&\left\{ f\in \mathcal{A}:\frac{%
zf^{\prime }\left( z\right) }{f\left( z\right) }\prec \varphi \left(
z\right) \quad \left( \varphi \in \mathcal{N};\;z\in \mathbb{D}\right)
\right\} ,  \label{1.a} \\
\mathcal{K}\left( \varphi \right)  &=&\left\{ f\in \mathcal{A}:1+\frac{%
zf^{\prime \prime }\left( z\right) }{f^{\prime }\left( z\right) }\prec
\varphi \left( z\right) \quad \left( \varphi \in \mathcal{N};\;z\in \mathbb{D%
}\right) \right\} ,  \label{1.b} \\
\mathcal{C}\left( \varphi ,\psi \right)  &=&\left\{ f\in \mathcal{A}:g\in
\mathcal{K}\left( \psi \right) \;\wedge \;\frac{f^{\prime }\left( z\right) }{%
g^{\prime }\left( z\right) }\prec \varphi \left( z\right) \quad \left(
\varphi ,\psi \in \mathcal{N};\;z\in \mathbb{D}\right) \right\} ,
\label{1.c} \\
\mathcal{CS}\left( \varphi ,\psi \right)  &=&\left\{ f\in \mathcal{A}:g\in
\mathcal{S}^{\ast }\left( \psi \right) \;\wedge \;\frac{f\left( z\right) }{%
g\left( z\right) }\prec \varphi \left( z\right) \quad \left( \varphi ,\psi
\in \mathcal{N};\;z\in \mathbb{D}\right) \right\} ,  \label{1.d} \\
\mathcal{QK}\left( \varphi ,\psi \right)  &=&\left\{ f\in \mathcal{A}:g\in
\mathcal{K}\left( \psi \right) \;\wedge \;\frac{\left( zf^{\prime }\left(
z\right) \right) ^{\prime }}{g^{\prime }\left( z\right) }\prec \varphi
\left( z\right) \quad \left( \varphi ,\psi \in \mathcal{N};\;z\in \mathbb{D}%
\right) \right\} .  \label{1.e}
\end{eqnarray}

The classes $\mathcal{S}^{\ast }\left( \varphi \right) $ and $\mathcal{K}%
\left( \varphi \right) $ are introduced by Ma and Minda \cite{Ma-Minda}, and
the class $\mathcal{C}\left( \varphi ,\psi \right) $ is introduced by Kim et
al. \cite{KCS}. Since%
\begin{equation*}
f(z)\in \mathcal{K}\left( \varphi \right) \Leftrightarrow zf^{\prime }(z)\in
\mathcal{S}^{\ast }\left( \varphi \right) ,
\end{equation*}%
we also have%
\begin{equation*}
f(z)\in \mathcal{C}\left( \varphi ,\psi \right) \Leftrightarrow \exists g\in
\mathcal{S}^{\ast }\left( \psi \right) \quad \text{s.t.}\quad \frac{%
zf^{\prime }\left( z\right) }{g\left( z\right) }\prec \varphi \left(
z\right) \quad \left( z\in \mathbb{D}\right) .
\end{equation*}

\begin{remark}
If we choose%
\begin{equation*}
\varphi \left( z\right) =\frac{1+Az}{1+Bz}\quad \left( -1\leq B<A\leq
1\right)
\end{equation*}%
in $\left( \ref{1.a}\right) $ and $\left( \ref{1.b}\right) $, then we get
the classes of Janowski starlike functions and Janowski convex functions%
\begin{equation*}
\mathcal{S}^{\ast }\left( \frac{1+Az}{1+Bz}\right) =\mathcal{S}^{\ast
}\left( A,B\right) \qquad \text{and\qquad }\mathcal{K}\left( \frac{1+Az}{1+Bz%
}\right) =\mathcal{K}\left( A,B\right) ,
\end{equation*}%
respectively, introduced by Janowski \cite{J}.
\end{remark}

\begin{remark}
If we choose%
\begin{equation*}
\varphi \left( z\right) =\frac{1+Az}{1+Bz}\quad \left( -1\leq B<A\leq
1\right) \qquad \text{and}\qquad \psi \left( z\right) =\frac{1+z}{1-z}
\end{equation*}%
in $\left( \ref{1.c}\right) $ and $\left( \ref{1.d}\right) $, then we obtain
the classes%
\begin{equation*}
\mathcal{C}\left( \frac{1+Az}{1+Bz},\frac{1+z}{1-z}\right) =\mathcal{CCV}%
\left( A,B\right) ,\qquad \mathcal{CS}\left( \frac{1+Az}{1+Bz},\frac{1+z}{1-z%
}\right) =\mathcal{CST}\left( A,B\right)
\end{equation*}%
introduced by Reade \cite{Reade}; and from $\left( \ref{1.e}\right) $, we
have the class%
\begin{equation*}
\mathcal{QK}\left( \frac{1+Az}{1+Bz},\frac{1+z}{1-z}\right) =\mathcal{QCV}%
\left( A,B\right)
\end{equation*}%
introduced by Alt\i nta\c{s} and K\i l\i \c{c} \cite{AK}.
\end{remark}

\begin{remark}
If we choose%
\begin{equation*}
\varphi \left( z\right) =\frac{1+\left( 1-2\alpha \right) z}{1-z}\quad
\left( 0\leq \alpha <1\right) \quad \text{and}\quad \psi \left( z\right) =%
\frac{1+\left( 1-2\beta \right) z}{1-z}\quad \left( 0\leq \beta <1\right)
\end{equation*}%
in $\left( \ref{1.c}\right) $, then we obtain the class of close-to-convex
functions of order $\alpha $ and type $\beta ,$%
\begin{equation*}
\mathcal{C}\left( \frac{1+\left( 1-2\alpha \right) z}{1-z},\frac{1+\left(
1-2\beta \right) z}{1-z}\right) =\mathcal{C}(\alpha ,\beta ),
\end{equation*}%
introduced by Libera \cite{L}.
\end{remark}

\begin{remark}
If we choose%
\begin{equation*}
\varphi \left( z\right) =\frac{1+z}{1-z}=\psi \left( z\right)
\end{equation*}%
in $\left( \ref{1.a}\right) $-$\left( \ref{1.c}\right) $, then we get the
familiar class $\mathcal{S}^{\ast }$ consists of starlike functions in $%
\mathbb{D}$, $\mathcal{K}$ consists of convex functions in $\mathbb{D}$ and $%
\mathcal{C}$ consists of close-to-convex function in $\mathbb{D}$,
respectively. Also, from $\left( \ref{1.d}\right) $ and $\left( \ref{1.e}%
\right) $, we get the class $\mathcal{CS}$ of close-to-starlike functions in
$\mathbb{D}$ introduced by Reade \cite{Reade}, and the class $\mathcal{Q}$
of quasi-convex functions in $\mathbb{D}$ introduced by Noor and Thomas \cite%
{NT}, respectively.
\end{remark}

Throughout this paper%
\begin{equation*}
0\leq \delta \leq \lambda \leq 1\qquad \text{and\qquad }\varphi ,\psi \in
\mathcal{N}.
\end{equation*}

Now we define new comprehensive subclasses of analytic function class $%
\mathcal{A}$, as follows:

\begin{dfn}
\label{def3}A function $f\in \mathcal{A}$ is said to be in the class $%
\mathcal{K}_{\lambda ,\delta }\left( \varphi ,\psi \right) $ if%
\begin{equation}
\frac{f^{\prime }\left( z\right) +\left( \lambda -\delta +2\lambda \delta
\right) zf^{\prime \prime }\left( z\right) +\lambda \delta z^{2}f^{\prime
\prime \prime }(z)}{g^{\prime }\left( z\right) }\prec \varphi \left(
z\right) \qquad \left( z\in \mathbb{D}\right) ,  \label{def.3}
\end{equation}%
where $g\in \mathcal{K}\left( \psi \right) .$
\end{dfn}

\begin{dfn}
\label{def4}a function $f\in \mathcal{A}$ is said to be in the class $%
\mathcal{S}_{\lambda ,\delta }\left( \varphi ,\psi \right) $ if%
\begin{equation}
\frac{\left( 1-\lambda +\delta \right) f\left( z\right) +\left( \lambda
-\delta \right) zf^{\prime }\left( z\right) +\lambda \delta z^{2}f^{\prime
\prime }(z)}{g\left( z\right) }\prec \varphi \left( z\right) \qquad \left(
z\in \mathbb{D}\right) ,  \label{def.4}
\end{equation}%
where $g\in \mathcal{S}^{\ast }\left( \psi \right) .$
\end{dfn}

\begin{remark}
If we set $\delta =0$ and $\lambda =1$ in Definition $\ref{def3}$ and
Definition $\ref{def4}$, then we have the classes%
\begin{equation*}
\mathcal{K}_{1,0}\left( \varphi ,\psi \right) =\mathcal{QK}\left( \varphi
,\psi \right) \qquad \text{and\qquad }\mathcal{S}_{1,0}\left( \varphi ,\psi
\right) =\mathcal{C}\left( \varphi ,\psi \right) .
\end{equation*}%
Also when $\delta =0$ and $\lambda =0$, we get the classes%
\begin{equation*}
\mathcal{K}_{0,0}\left( \varphi ,\psi \right) =\mathcal{C}\left( \varphi
,\psi \right) \qquad \text{and\qquad }\mathcal{S}_{0,0}\left( \varphi ,\psi
\right) =\mathcal{CS}\left( \varphi ,\psi \right) .
\end{equation*}
\end{remark}

\begin{remark}
If we set $\delta =0$ and%
\begin{equation*}
\varphi \left( z\right) =\frac{1+Az}{1+Bz}\quad \left( -1\leq B<A\leq
1\right) \qquad \text{and}\qquad \psi \left( z\right) =\frac{1+z}{1-z}
\end{equation*}%
in Definition $\ref{def3}$ and Definition $\ref{def4}$, then we obtain the
classes $\mathcal{Q}_{\mathcal{CV}}\left( \lambda ,A,B\right) $ and $%
\mathcal{Q}_{\mathcal{ST}}\left( \lambda ,A,B\right) $, respectively,
introduced very recently by Alt\i nta\c{s} and K\i l\i \c{c} \cite{AK}.
These classes consist of functions $f\in \mathcal{A}$ satisfying%
\begin{equation*}
\frac{f^{\prime }\left( z\right) +\lambda zf^{\prime \prime }\left( z\right)
}{g^{\prime }\left( z\right) }\prec \frac{1+Az}{1+Bz}\qquad \left( g\in
\mathcal{K},\;z\in \mathbb{D}\right)
\end{equation*}%
and%
\begin{equation*}
\frac{\left( 1-\lambda \right) f\left( z\right) +\lambda zf^{\prime }\left(
z\right) }{g\left( z\right) }\prec \frac{1+Az}{1+Bz}\qquad \left( g\in
\mathcal{S}^{\ast },\;z\in \mathbb{D}\right) ,
\end{equation*}%
respectively.
\end{remark}

Alt\i nta\c{s} and K\i l\i \c{c} \cite{AK} obtained following coefficient
bounds for functions belong to the classes $\mathcal{Q}_{\mathcal{CV}}\left(
\lambda ,A,B\right) $ and $\mathcal{Q}_{\mathcal{ST}}\left( \lambda
,A,B\right) $, as follows:

\begin{thm}
\label{thm.A}If $f\in \mathcal{Q}_{\mathcal{CV}}\left( \lambda ,A,B\right) ,$
then%
\begin{equation*}
\left\vert a_{n}\right\vert \leq \frac{1}{1+\left( n-1\right) \lambda }%
\left( 1+\frac{\left( n-1\right) \left( A-B\right) }{1-B}\right) \qquad
\left( n=2,3,\ldots \right) .
\end{equation*}
\end{thm}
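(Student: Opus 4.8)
The plan is to reduce the statement to the known coefficient estimate for Janowski close-to-convex functions by means of the operator $h(z)=(1-\lambda)f(z)+\lambda zf^{\prime}(z)$. First I would note that $h\in\mathcal{A}$, that $h^{\prime}(z)=f^{\prime}(z)+\lambda zf^{\prime\prime}(z)$, and that on setting $G(z)=zg^{\prime}(z)$ the convex hypothesis $g\in\mathcal{K}$ becomes the starlike hypothesis $G\in\mathcal{S}^{\ast}$ (since $g\in\mathcal{K}(\varphi)\Leftrightarrow zg^{\prime}\in\mathcal{S}^{\ast}(\varphi)$, here with $\varphi(z)=(1+z)/(1-z)$). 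Dividing numerator and denominator of the defining quotient by $z$ gives
\[
\frac{f^{\prime}(z)+\lambda zf^{\prime\prime}(z)}{g^{\prime}(z)}=\frac{zh^{\prime}(z)}{G(z)},
\]
so that $f\in\mathcal{Q}_{\mathcal{CV}}(\lambda,A,B)$ is equivalent to
\[
\frac{zh^{\prime}(z)}{G(z)}\prec\frac{1+Az}{1+Bz}\qquad\left(G\in\mathcal{S}^{\ast}\right),
\]
which, by the characterisation of $\mathcal{C}(\varphi,\psi)$ recorded above, is exactly $h\in\mathcal{C}\!\left(\frac{1+Az}{1+Bz},\frac{1+z}{1-z}\right)=\mathcal{CCV}(A,B)$.

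Next I would record the coefficient relation produced by the operator: if $f(z)=z+\sum_{n\ge2}a_{n}z^{n}$, then $h(z)=z+\sum_{n\ge2}\left[1+(n-1)\lambda\right]a_{n}z^{n}$, so the $n$-th Taylor coefficient of $h$ is $\left[1+(n-1)\lambda\right]a_{n}$. Hence the desired bound for $|a_{n}|$ is equivalent to the coefficient estimate
\[
|h_{n}|\le 1+\frac{(n-1)(A-B)}{1-B}\qquad(n=2,3,\ldots)
\]
for functions $h\in\mathcal{CCV}(A,B)$; this is the Janowski close-to-convex bound, which for $A=1$, $B=-1$ reduces to the familiar $|h_{n}|\le n$.

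To prove that last estimate I would use the defining subordination in the form $zh^{\prime}(z)=p(z)G(z)$, where $p(0)=1$ and $p\prec\frac{1+Az}{1+Bz}$, and expand $G(z)=z+\sum_{m\ge2}d_{m}z^{m}\in\mathcal{S}^{\ast}$ together with $p(z)=1+\sum_{k\ge1}p_{k}z^{k}$. Comparing coefficients of $z^{n}$ yields
\[
nh_{n}=d_{n}+\sum_{k=1}^{n-1}p_{k}\,d_{n-k}\qquad(d_{1}=1).
\]
The two quantitative inputs are the starlike coefficient bound $|d_{m}|\le m$ and a bound of the shape $|p_{k}|\le\kappa$ for the subordinate function $p$; inserting both, applying the triangle inequality and summing the resulting arithmetic progression gives a linear-in-$n$ estimate for $|h_{n}|$, which after dividing by $n$ and by $1+(n-1)\lambda$ produces the asserted bound for $|a_{n}|$.

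The main obstacle is precisely the coefficient estimate for $p\prec\frac{1+Az}{1+Bz}$: the constant $1/(1-B)$ in the final inequality is inherited directly from the size of the Taylor coefficients of $p$, so the choice of that estimate is decisive. The majorant coming from the image disk of $(1+Az)/(1+Bz)$ is of the right type to produce the constant $1/(1-B)$, whereas the sharper Rogosinski-type bound $|p_{k}|\le A-B$ yields a strictly smaller constant; this is exactly the slack that the improved results of the present paper will exploit. A secondary point is the bookkeeping of the convolution sum, where one must keep the $m=1$ term carrying $d_{1}=1$ rather than the majorant value, but this does not affect the leading order of the estimate.
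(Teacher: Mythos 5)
Your proposal is correct, and its analytic core coincides with the paper's, but the packaging is genuinely different. Note first that the paper never proves Theorem \ref{thm.A} head-on: it is quoted from \cite{AK}, and the paper's effective proof of it is Theorem \ref{thm1} for the full class $\mathcal{K}_{\lambda ,\delta }\left( \varphi ,\psi \right) $ --- define $p=(\text{numerator})/g^{\prime }$, bound $\left\vert c_{m}\right\vert \leq \left\vert \varphi ^{\prime }(0)\right\vert $ by Rogosinski's Lemma \ref{lem2}, bound the coefficients of the convex denominator by Lemma \ref{lem3}, and compare Cauchy-product coefficients with the $\lambda ,\delta $ factors carried along --- followed by the specialization $\delta =0$, $\varphi (z)=\frac{1+Az}{1+Bz}$, $\psi (z)=\frac{1+z}{1-z}$, which yields Corollary \ref{cor1} with the sharper constant $1+\frac{(n-1)(A-B)}{2}$; Theorem \ref{thm.A} then follows since $1-B\leq 2$. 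Your route instead absorbs $\lambda $ at the outset through the substitution $h=(1-\lambda )f+\lambda zf^{\prime }$: since $h^{\prime }=f^{\prime }+\lambda zf^{\prime \prime }$ and $h_{n}=\left[ 1+(n-1)\lambda \right] a_{n}$, membership $f\in \mathcal{Q}_{\mathcal{CV}}\left( \lambda ,A,B\right) $ is literally $h\in \mathcal{CCV}\left( A,B\right) $, so the $\lambda $-dependence becomes pure bookkeeping and only the coefficient bound for the $\lambda $-free class is needed. Your identity $nh_{n}=d_{n}+\sum_{k=1}^{n-1}p_{k}d_{n-k}$ with $\left\vert d_{m}\right\vert \leq m$ is the same convolution comparison the paper performs, just in the starlike normalization $G=zg^{\prime }$ rather than the convex one (the paper uses $\left\vert b_{n}\right\vert \leq 1$ for convex $g$; since $d_{m}=mb_{m}$ the content is identical). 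What your reduction buys is transparency for this particular statement; what the paper's direct computation buys is the general result for arbitrary $\varphi ,\psi \in \mathcal{N}$ and $\delta \neq 0$ in one pass.

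One point you should firm up: your final paragraph hedges between an ``image-disk majorant'' for $p\prec \frac{1+Az}{1+Bz}$ (hoping to reproduce the constant $\frac{1}{1-B}$ exactly) and the Rogosinski bound $\left\vert p_{k}\right\vert \leq A-B$. The image-disk route is the only vague step in your plan, and it is unnecessary: the statement to be proved is an upper bound, so you may simply commit to Rogosinski (apply Lemma \ref{lem2} to $p-1\prec \varphi -1$, where $\varphi (z)-1=\frac{(A-B)z}{1+Bz}$ is convex univalent and vanishes at $0$, giving $\left\vert p_{k}\right\vert \leq \left\vert \varphi ^{\prime }(0)\right\vert =A-B$). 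Summing then gives $\left\vert h_{n}\right\vert \leq 1+\frac{(n-1)(A-B)}{2}$, and since $\frac{A-B}{2}\leq \frac{A-B}{1-B}$ for $-1\leq B<A\leq 1$, the claimed inequality follows; indeed this proves the paper's improved Corollary \ref{cor1}, of which Theorem \ref{thm.A} is a weakening.
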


\begin{thm}
\label{thm.B}If $f\in \mathcal{Q}_{\mathcal{ST}}\left( \lambda ,A,B\right) ,$
then%
\begin{equation*}
\left\vert a_{n}\right\vert \leq \frac{n}{1+\left( n-1\right) \lambda }%
\left( 1+\frac{\left( n-1\right) \left( A-B\right) }{1-B}\right) \qquad
\left( n=2,3,\ldots \right) .
\end{equation*}
\end{thm}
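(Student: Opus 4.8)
The plan is to strip the operator out of the definition and reduce everything to two classical coefficient estimates. Writing $f(z)=z+\sum_{n\ge2}a_nz^n$ and putting
\[ F(z):=(1-\lambda)f(z)+\lambda zf'(z), \]
a one-line expansion gives $F(z)=z+\sum_{n\ge2}\bigl[1+(n-1)\lambda\bigr]a_nz^n$. Thus, setting $b_n:=[1+(n-1)\lambda]a_n$, the asserted estimate is exactly $|b_n|\le n\bigl(1+\tfrac{(n-1)(A-B)}{1-B}\bigr)$ for the coefficients of $F$. By the definition of $\mathcal{Q}_{\mathcal{ST}}(\lambda,A,B)$ there is a $g\in\mathcal S^{*}$ and an analytic $h$ with $h\prec\tfrac{1+Az}{1+Bz}$ and $F=g\,h$; in other words $F$ is close-to-starlike, and it suffices to bound the coefficients of such a product.

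I would then invoke two inputs. First, since $g(z)=z+\sum_{n\ge2}c_nz^n\in\mathcal S^{*}$, the sharp starlike bound $|c_n|\le n$ holds. Second, since $\varphi(z)=\tfrac{1+Az}{1+Bz}$ is univalent and maps $\mathbb D$ onto a disk (a convex domain), Rogosinski's subordination theorem applies to $h\prec\varphi$: writing $h(z)=1+\sum_{n\ge1}h_nz^n$ one obtains $|h_n|\le|\varphi'(0)|=A-B$ for all $n\ge1$. Supplying this inequality is the step I expect to demand the most care, as it is where the structural hypotheses ($\varphi\in\mathcal N$ convex and univalent, $-1\le B<A\le1$) are genuinely used.

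With these in hand I would equate coefficients in $F=g\,h$. Using $c_1=1$ and $h_0=1$, the coefficient of $z^n$ is
\[ b_n=\sum_{k=1}^{n}c_k\,h_{n-k}=c_n+\sum_{k=1}^{n-1}c_k\,h_{n-k}, \]
and the triangle inequality with $|c_k|\le k$ and $|h_{n-k}|\le A-B$ gives
\[ |b_n|\le n+(A-B)\sum_{k=1}^{n-1}k=n\Bigl(1+\tfrac{(n-1)(A-B)}{2}\Bigr). \]
Dividing by $1+(n-1)\lambda>0$ produces the bound on $|a_n|$.

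Finally, because $B\ge-1$ we have $1-B\le2$, hence $\tfrac{n-1}{2}\le\tfrac{n-1}{1-B}$, and the estimate just obtained is at least as strong as the one claimed, so Theorem \ref{thm.B} follows. I would remark that this convolution argument in fact yields the smaller constant $\tfrac12$ in place of $\tfrac1{1-B}$, which already shows that the stated bound is not best possible; sharpening this constant (and exhibiting extremal functions) is presumably the improvement promised in the abstract. Apart from the justification of $|h_n|\le A-B$, every remaining step is routine bookkeeping, in particular keeping track of the term $c_1=1$ in the convolution.
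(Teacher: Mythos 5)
Your proof is correct and follows essentially the same route as the paper: the paper proves the general Theorem \ref{thm2} by writing the operator expression as $q(z)g(z)$ with $g$ starlike and $q\prec\varphi$, bounding $g$'s coefficients by the starlike estimate (Lemma \ref{lem4}) and $q$'s by Rogosinski's subordination lemma (Lemma \ref{lem2}), then multiplying out and applying the triangle inequality --- exactly your $F=gh$ argument, specialized to $\delta=0$, $\varphi=\tfrac{1+Az}{1+Bz}$, $\psi=\tfrac{1+z}{1-z}$. Even your closing observation that the method yields the sharper constant $\tfrac12$ in place of $\tfrac1{1-B}$ is precisely the paper's Corollary \ref{cor2} and the remark following it, which is the advertised improvement of Theorem \ref{thm.B}.
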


In this work, we obtain coefficient bounds for functions in the
comprehensive subclasses $\mathcal{K}_{\lambda ,\delta }\left( \varphi ,\psi
\right) $ and $\mathcal{S}_{\lambda ,\delta }\left( \varphi ,\psi \right) $\
of analytic functions. Our results improve the results of Alt\i nta\c{s} and
K\i l\i \c{c} \cite{AK} (Theorem $\ref{thm.A}$ and Theorem $\ref{thm.B}$).

\section{Main results}

\begin{lem}
\label{lem2}\cite{R2} Let the function $\Phi $ given by%
\begin{equation*}
\Phi \left( z\right) =\sum_{n=1}^{\infty }A_{n}z^{n}\qquad \left( z\in
\mathbb{D}\right)
\end{equation*}%
be convex in $\mathbb{D}.$ Also let the function $\Psi $ given by%
\begin{equation*}
\Psi (z)=\sum_{n=1}^{\infty }B_{n}z^{n}\qquad \left( z\in \mathbb{D}\right)
\end{equation*}%
be holomorphic in $\mathbb{D}.$ If%
\begin{equation*}
\Psi \left( z\right) \prec \Phi \left( z\right) \qquad \left( z\in \mathbb{D}%
\right) ,
\end{equation*}%
then%
\begin{equation*}
\left\vert B_{n}\right\vert \leq \left\vert A_{1}\right\vert \qquad \left(
n=1,2,\ldots \right) .
\end{equation*}
\end{lem}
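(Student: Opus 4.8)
The plan is to reduce the general subordination to one whose subordinate function vanishes to high order at the origin, by averaging $\Psi$ over the $n$-th roots of unity, and then to extract the bound from the Schwarz lemma; convexity of $\Phi$ is what makes the averaged function again subordinate to $\Phi$.

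Since $\Phi$ is convex it is univalent, so $\Psi\prec\Phi$ furnishes a Schwarz function $\omega$ (analytic with $\omega(0)=0$ and $|\omega(z)|<1$) such that $\Psi(z)=\Phi(\omega(z))$. Fix $n\geq 1$, set $\zeta=e^{2\pi i/n}$, and form the rotational average
\[
S(z)=\frac{1}{n}\sum_{k=0}^{n-1}\Psi\left(\zeta^{k}z\right).
\]
Inserting the power series $\Psi(z)=\sum_{m\geq 1}B_{m}z^{m}$ and using that $\frac{1}{n}\sum_{k=0}^{n-1}\zeta^{km}$ equals $1$ when $n\mid m$ and $0$ otherwise, one finds $S(z)=B_{n}z^{n}+B_{2n}z^{2n}+\cdots$; thus $S$ vanishes to order at least $n$ and its leading coefficient is precisely $B_{n}$.

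The decisive step uses convexity. Writing $S(z)=\frac{1}{n}\sum_{k=0}^{n-1}\Phi(\omega(\zeta^{k}z))$ displays $S(z)$ as an equally weighted average of the $n$ points $\Phi(\omega(\zeta^{k}z))$, each lying in the convex domain $\Phi(\mathbb{D})$. A convex set contains the convex hull of any finite subset, so $S(z)\in\Phi(\mathbb{D})$ for every $z\in\mathbb{D}$; since also $S(0)=\Phi(0)$ and $\Phi$ is univalent, this yields $S\prec\Phi$. Hence $S(z)=\Phi(\widetilde{\omega}(z))$ for some Schwarz function $\widetilde{\omega}$, and because $\Phi(w)=A_{1}w+\cdots$ with $A_{1}=\Phi'(0)\neq 0$ while $S$ vanishes to order $n$, the map $\widetilde{\omega}$ also vanishes to order $n$, say $\widetilde{\omega}(z)=cz^{n}+\cdots$. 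Comparing leading coefficients in $S=\Phi\circ\widetilde{\omega}$ gives $B_{n}=A_{1}c$.

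Finally I would bound $|c|$ by the Schwarz lemma in its higher-order form: since $\widetilde{\omega}:\mathbb{D}\to\mathbb{D}$ has a zero of order $n$ at $0$, the function $\widetilde{\omega}(z)/z^{n}$ is holomorphic and is bounded by $r^{-n}$ on $|z|=r$, so the maximum principle together with $r\to 1^{-}$ gives $|\widetilde{\omega}(z)/z^{n}|\leq 1$, whence $|c|\leq 1$. Therefore $|B_{n}|=|A_{1}|\,|c|\leq|A_{1}|$, as asserted. I expect the only genuine obstacle to be the convexity step, namely verifying that the finite average $S(z)$ stays inside the open set $\Phi(\mathbb{D})$ (so that $S\prec\Phi$ is legitimate) rather than merely in its closure; this is exactly the point where the hypothesis that $\Phi$ is convex, and not merely univalent, is indispensable.
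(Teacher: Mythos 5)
Your proof is correct, and there is nothing in the paper to compare it against: the paper does not prove this lemma at all, but quotes it from Rogosinski \cite{R2}. What you have reconstructed is essentially the classical argument behind Rogosinski's theorem: average $\Psi$ over the $n$-th roots of unity to isolate the coefficients $B_{n},B_{2n},\ldots$, use convexity of $\Phi(\mathbb{D})$ to see that the averaged function is still subordinate to $\Phi$, and then extract $\left\vert B_{n}\right\vert \leq \left\vert A_{1}\right\vert$ from the Schwarz lemma. The one stylistic difference from the textbook version (e.g.\ in Duren's account of Rogosinski's theorem) is at the last step: the usual route writes the averaged function as $T(z^{n})$ with $T(w)=B_{n}w+B_{2n}w^{2}+\cdots$, notes $T\prec \Phi$, and applies the $n=1$ coefficient bound to $T$, whereas you keep $S$ itself and invoke the higher-order Schwarz lemma on the Schwarz function $\widetilde{\omega}$, which vanishes to order $n$; the two are equivalent and equally rigorous. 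Also, the worry you flag at the end is not an obstacle: since $\Phi(\mathbb{D})$ is itself convex (not merely its closure), the equal-weight average of the $n$ points $\Phi\left(\omega\left(\zeta^{k}z\right)\right)$ lies in $\Phi(\mathbb{D})$ exactly as you argued, so $S\prec \Phi$ holds with no limiting argument needed. Two trivial loose ends worth phrasing carefully: $S$ vanishes to order \emph{at least} $n$ (it may vanish to higher order, or be identically zero, in which case $B_{n}=0$ and the bound is trivial), and $A_{1}\neq 0$ should be justified by the univalence of the convex function $\Phi$.
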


\begin{lem}
\label{lem3}\cite{XGS} Let $f\in \mathcal{K}\left( \psi \right) $ and be of
the form $\left( \ref{1.1}\right) $, then%
\begin{equation*}
\left\vert a_{n}\right\vert \leq \frac{\prod\limits_{j=0}^{n-2}\left(
j+\left\vert \psi ^{\prime }(0)\right\vert \right) }{n!}\qquad \left(
n=2,3,\ldots \right) .
\end{equation*}
\end{lem}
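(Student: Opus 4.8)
The plan is to establish the coefficient bound for the convex-type class $\mathcal{K}(\psi)$ stated in Lemma \ref{lem3}. The natural approach is induction on $n$ together with the characterization $f\in\mathcal{K}(\psi)\Leftrightarrow 1+zf''(z)/f'(z)\prec\psi(z)$. First I would write $1+zf''(z)/f'(z)=\psi(w(z))$ for some Schwarz function $w$, or equivalently introduce $p(z)=1+zf''(z)/f'(z)$, which satisfies $p\prec\psi$ and hence admits a power series $p(z)=1+\sum_{k=1}^{\infty}p_k z^k$. Since $\psi\in\mathcal{N}$ maps $\mathbb{D}$ onto a convex domain with $\psi(0)=1$, Lemma \ref{lem2} (applied after subtracting the constant term, so that the relevant convex function is $\psi(z)-1=\sum\psi'(0)z+\cdots$) yields the uniform bound $|p_k|\le|\psi'(0)|$ for every $k\ge1$. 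This is the engine that feeds the induction.

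Next I would convert the subordination relation into a recursion for the Taylor coefficients $a_n$ of $f$. Clearing denominators in $p(z)=1+zf''(z)/f'(z)$ gives the identity
\begin{equation*}
zf''(z)=\left(p(z)-1\right)f'(z)\qquad(z\in\mathbb{D}).
\end{equation*}
Writing $f'(z)=\sum_{n=1}^{\infty}n a_n z^{n-1}$ (with $a_1=1$) and $zf''(z)=\sum_{n=1}^{\infty}n(n-1)a_n z^{n-1}$, and matching the coefficient of $z^{n-1}$ on both sides, produces a relation of the form
\begin{equation*}
n(n-1)a_n=\sum_{k=1}^{n-1}p_k\,(n-k)\,a_{n-k}.
\end{equation*}
Taking absolute values and invoking $|p_k|\le|\psi'(0)|$ turns this into the inequality
\begin{equation*}
n(n-1)\left\vert a_n\right\vert\le|\psi'(0)|\sum_{k=1}^{n-1}(n-k)\left\vert a_{n-k}\right\vert =|\psi'(0)|\sum_{m=1}^{n-1}m\left\vert a_m\right\vert .
\end{equation*}

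The remaining work is the inductive step: assuming $|a_m|\le\prod_{j=0}^{m-2}(j+|\psi'(0)|)/m!$ for all $m<n$, substitute these bounds into the sum and verify that the right-hand side collapses to $n(n-1)$ times the claimed bound for $|a_n|$. I would treat the telescoping carefully, writing $D=|\psi'(0)|$ and denoting the claimed bound by $T_m=\prod_{j=0}^{m-2}(j+D)/m!$, so that $m\,T_m=\prod_{j=0}^{m-2}(j+D)/(m-1)!$; the key algebraic fact is that the partial sums of $m\,T_m$ satisfy a clean identity, namely $\sum_{m=1}^{n-1}m\,T_m=\tfrac{1}{D}\bigl((n-1)\,n\,T_n-T_1\bigr)$ or an equivalent closed form, which is exactly what is needed to close the induction. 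The main obstacle is precisely this last step: recognizing and verifying the closed-form summation for $\sum_{m=1}^{n-1} m\,T_m$ so that the product $\prod_{j=0}^{n-2}(j+D)$ reappears cleanly. Once that summation identity is in hand, dividing by $n(n-1)$ gives $|a_n|\le T_n$ and the induction is complete.
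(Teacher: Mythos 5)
The paper itself does not prove this lemma: it is imported verbatim from Xu, Gui and Srivastava \cite{XGS}, so there is no internal proof to compare against, and your argument has to be judged on its own. Your route is the standard one for bounds of this type: the characterization $1+zf''/f'\prec\psi$, Rogosinski's lemma (Lemma \ref{lem2}) applied after stripping the constant term to get $|p_k|\le|\psi'(0)|$, the coefficient recursion, and induction. Those steps are all set up correctly; in particular the recursion $n(n-1)a_n=\sum_{k=1}^{n-1}p_k(n-k)a_{n-k}$ is exactly what $zf''=(p-1)f'$ gives, and it coincides with what one would get by routing the argument through Alexander's correspondence $f\in\mathcal{K}(\psi)\Leftrightarrow zf'\in\mathcal{S}^{*}(\psi)$ and the starlike recursion.

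The one place you stopped short is the summation identity you flagged as the main obstacle, and the closed form you guessed there is not correct: with $D=|\psi'(0)|$ and $T_m=\prod_{j=0}^{m-2}(j+D)/m!$, the statement you need is the exact identity
\begin{equation*}
D\sum_{m=1}^{n-1}m\,T_m=n(n-1)\,T_n\qquad(n=2,3,\ldots),
\end{equation*}
with no $-T_1$ correction term; your version $\frac{1}{D}\bigl(n(n-1)T_n-T_1\bigr)$ fails already at $n=2$, where the left side is $T_1=1$ while your expression equals $1-\frac{1}{D}$. The correct identity is a one-line induction: at $n=2$ both sides equal $D$ since $2T_2=D$, and if it holds for $n$ then
\begin{equation*}
D\sum_{m=1}^{n}m\,T_m=n(n-1)T_n+D\,nT_n=(n-1+D)\,nT_n=(n+1)n\,T_{n+1},
\end{equation*}
because $nT_n=\prod_{j=0}^{n-2}(j+D)/(n-1)!$ and $(n-1+D)\prod_{j=0}^{n-2}(j+D)=\prod_{j=0}^{n-1}(j+D)$. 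Substituting this identity into your inequality $n(n-1)|a_n|\le D\sum_{m=1}^{n-1}m|a_m|\le D\sum_{m=1}^{n-1}mT_m$ closes the induction (in fact with equality on the majorant side), so once the identity is stated and verified correctly your proof is complete.
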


\begin{lem}
\label{lem4}\cite{XGS} Let $f\in \mathcal{S}^{\ast }\left( \psi \right) $
and be of the form $\left( \ref{1.1}\right) $, then%
\begin{equation*}
\left\vert a_{n}\right\vert \leq \frac{\prod\limits_{j=0}^{n-2}\left(
j+\left\vert \psi ^{\prime }(0)\right\vert \right) }{\left( n-1\right) !}%
\qquad \left( n=2,3,\ldots \right) .
\end{equation*}
\end{lem}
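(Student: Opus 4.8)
The plan is to exploit the subordination that defines $\mathcal{S}^{\ast}(\psi)$ together with the Rogosinski-type estimate of Lemma \ref{lem2}, and then to run an induction on $n$ that closes by a rising-factorial identity. Write $B:=\left\vert \psi^{\prime}(0)\right\vert$ and set
\begin{equation*}
p(z):=\frac{zf^{\prime}(z)}{f(z)}=1+\sum_{n=1}^{\infty}p_{n}z^{n}\qquad (z\in\mathbb{D}),
\end{equation*}
which is holomorphic with $p(0)=1$ since $f(z)=z+\cdots$. The membership $f\in\mathcal{S}^{\ast}(\psi)$ means $p\prec\psi$, hence $p(z)-1\prec\psi(z)-1$; as $\psi$ is convex, so is $\psi-1$, and its first Taylor coefficient is $\psi^{\prime}(0)$. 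Applying Lemma \ref{lem2} with $\Psi=p-1$ and $\Phi=\psi-1$ then yields the uniform bound $\left\vert p_{n}\right\vert\leq B$ for every $n\geq 1$.

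Next I would turn the identity $zf^{\prime}(z)=f(z)\,p(z)$ into a coefficient recursion. Expanding both sides with $f(z)=z+\sum_{n\geq 2}a_{n}z^{n}$ (so $a_{1}=1$) and comparing coefficients of $z^{n}$ gives
\begin{equation*}
(n-1)\,a_{n}=\sum_{k=1}^{n-1}a_{k}\,p_{n-k}\qquad (n\geq 2).
\end{equation*}
Since each index here satisfies $1\leq n-k\leq n-1$, the estimate $\left\vert p_{n-k}\right\vert\leq B$ applies to every term, and the triangle inequality produces the scalar inequality
\begin{equation*}
(n-1)\left\vert a_{n}\right\vert\leq B\sum_{k=1}^{n-1}\left\vert a_{k}\right\vert,
\end{equation*}
which is the engine of the induction.

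The induction on $n$ then proceeds as follows. The base case $n=2$ reads $\left\vert a_{2}\right\vert\leq B$, matching $\prod_{j=0}^{0}(j+B)/1!=B$. Assuming the asserted bound $\left\vert a_{k}\right\vert\leq\prod_{j=0}^{k-2}(j+B)/(k-1)!$ for all $2\leq k<n$, I substitute these into the scalar inequality above. The decisive ingredient is the partial-sum (hockey-stick) identity
\begin{equation*}
\sum_{k=1}^{n-1}\frac{\prod_{j=0}^{k-2}(j+B)}{(k-1)!}=\frac{\prod_{j=1}^{n-2}(j+B)}{(n-2)!},
\end{equation*}
which collapses the accumulated bound. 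Using $\prod_{j=0}^{n-2}(j+B)=B\prod_{j=1}^{n-2}(j+B)$, the right-hand side equals $\prod_{j=0}^{n-2}(j+B)/\bigl(B\,(n-2)!\bigr)$, whence
\begin{equation*}
(n-1)\left\vert a_{n}\right\vert\leq B\cdot\frac{\prod_{j=0}^{n-2}(j+B)}{B\,(n-2)!}=\frac{\prod_{j=0}^{n-2}(j+B)}{(n-2)!},
\end{equation*}
and division by $n-1$ delivers the claimed estimate. The hard part will be justifying this rising-factorial identity so that the induction closes to exactly the target bound (rather than to something strictly larger); it may be obtained by a short secondary induction on $n$ or recognized as the classical identity $\sum_{k=0}^{m}\binom{B+k-1}{k}=\binom{B+m}{m}$. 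As a closing remark, the companion Lemma \ref{lem3} for $\mathcal{K}(\psi)$ follows at once from $f\in\mathcal{K}(\psi)\Leftrightarrow zf^{\prime}\in\mathcal{S}^{\ast}(\psi)$, since the $n$-th coefficient of $zf^{\prime}$ is $na_{n}$.
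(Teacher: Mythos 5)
The paper never proves Lemma \ref{lem4}: it is imported verbatim from Xu, Gui and Srivastava \cite{XGS}, so there is no in-paper argument to compare yours against. Your proof is correct and is essentially the classical argument for Ma--Minda-type coefficient bounds (the route taken in \cite{XGS} itself). Each step checks out: the subordination $zf'/f\prec\psi$ (whose analyticity is built into the definition of $\mathcal{S}^{\ast}(\psi)$) combined with Lemma \ref{lem2}, applied to $p-1\prec\psi-1$ --- legitimate, since translating a convex image keeps it convex and the first coefficient is $\psi'(0)$ --- gives $|p_n|\le B:=|\psi'(0)|$; the identity $zf'(z)=p(z)f(z)$ yields the recursion $(n-1)a_n=\sum_{k=1}^{n-1}a_kp_{n-k}$; and the induction closes exactly (not merely up to a larger bound) because
\begin{equation*}
\sum_{k=1}^{n-1}\frac{\prod_{j=0}^{k-2}(j+B)}{(k-1)!}=\frac{\prod_{j=1}^{n-2}(j+B)}{(n-2)!},
\end{equation*}
which is a polynomial identity in $B$ (so non-integer $B$ causes no difficulty) and follows from the one-line induction
\begin{equation*}
\frac{\prod_{j=1}^{n-2}(j+B)}{(n-2)!}+\frac{B\prod_{j=1}^{n-2}(j+B)}{(n-1)!}=\prod_{j=1}^{n-2}(j+B)\cdot\frac{(n-1)+B}{(n-1)!}=\frac{\prod_{j=1}^{n-1}(j+B)}{(n-1)!},
\end{equation*}
i.e.\ Pascal's rule for generalized binomial coefficients, exactly as you indicate. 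Your base case and the $k=1$ term (empty product equal to $1$, matching $|a_1|=1$) are handled consistently, and your closing remark that Lemma \ref{lem3} follows from the correspondence $f\in\mathcal{K}(\psi)\Leftrightarrow zf'\in\mathcal{S}^{\ast}(\psi)$, since the $n$-th coefficient of $zf'$ is $na_n$, is also correct. In short: a complete, correct proof of a result the paper only cites.
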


\begin{thm}
\label{thm1}Let $f\in \mathcal{K}_{\lambda ,\delta }\left( \varphi ,\psi
\right) $ and be of the form $(\ref{1.1})$, then%
\begin{eqnarray}
&&\left[ 1+\left( n-1\right) \left( \lambda -\delta +2\lambda \delta \right)
+\left( n-1\right) \left( n-2\right) \lambda \delta \right] \left\vert
a_{n}\right\vert  \notag \\
&\leq &\frac{\prod\limits_{j=0}^{n-2}\left( j+\left\vert \psi ^{\prime
}(0)\right\vert \right) }{n!}+\frac{\left\vert \varphi ^{\prime
}(0)\right\vert }{n}\left( 1+\sum_{k=1}^{n-2}\frac{\prod%
\limits_{j=0}^{n-k-2}\left( j+\left\vert \psi ^{\prime }(0)\right\vert
\right) }{\left( n-k-1\right) !}\right) \quad \left( n=2,3,\ldots \right) .
\label{2.1}
\end{eqnarray}
\end{thm}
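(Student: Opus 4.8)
The plan is to turn the defining subordination of $\mathcal{K}_{\lambda ,\delta }\left( \varphi ,\psi \right) $ into a single convolution identity for the Taylor coefficients, and then to estimate the three pieces of that identity separately using Lemma \ref{lem2} (to control the quotient $f$-to-$g$ data) and Lemma \ref{lem3} (to control the convex function $g$).

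First I would fix a function $g(z)=z+\sum_{n=2}^{\infty }b_{n}z^{n}\in \mathcal{K}\left( \psi \right) $ realizing the membership, abbreviate the numerator in $\left( \ref{def.3}\right) $ as
\[
N(z):=f^{\prime }(z)+\left( \lambda -\delta +2\lambda \delta \right) zf^{\prime \prime }(z)+\lambda \delta z^{2}f^{\prime \prime \prime }(z),
\]
and set $p(z):=N(z)/g^{\prime }(z)$, so that $\left( \ref{def.3}\right) $ reads $p\prec \varphi $ with $p(0)=\varphi (0)=1$. Writing $p=\varphi \circ \omega $ for a Schwarz function $\omega $ gives $p-1=(\varphi -1)\circ \omega $, hence $p-1\prec \varphi -1$; since $\varphi -1$ is convex and vanishes at the origin with first coefficient $\varphi ^{\prime }(0)$, Lemma \ref{lem2} yields $\left\vert p_{k}\right\vert \leq \left\vert \varphi ^{\prime }(0)\right\vert $ for every $k\geq 1$, where $p(z)=1+\sum_{k\geq 1}p_{k}z^{k}$.

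Next I would expand $N$. Applying the operator term by term to $z^{n}$ shows that the coefficient of $z^{n-1}$ in $N$ equals $nT_{n}a_{n}$, where $T_{n}:=1+\left( n-1\right) \left( \lambda -\delta +2\lambda \delta \right) +\left( n-1\right) \left( n-2\right) \lambda \delta $ is precisely the bracket on the left-hand side of $\left( \ref{2.1}\right) $ (with $T_{1}=1$, $a_{1}=1$). Comparing the coefficient of $z^{n-1}$ on both sides of $N(z)=p(z)\,g^{\prime }(z)$, using $g^{\prime }(z)=1+\sum_{m\geq 1}(m+1)b_{m+1}z^{m}$, and separating the index $k=0$ (which produces $nb_{n}$) and the index $k=n-1$ (which, because the constant coefficient of $g^{\prime }$ is $1$, produces the isolated term $p_{n-1}$) from the middle range, I obtain
\[
nT_{n}a_{n}=nb_{n}+\sum_{k=1}^{n-2}\left( n-k\right) p_{k}b_{n-k}+p_{n-1}.
\]

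Finally I would take absolute values, insert $\left\vert p_{k}\right\vert \leq \left\vert \varphi ^{\prime }(0)\right\vert $ and divide by $n$, which gives
\[
T_{n}\left\vert a_{n}\right\vert \leq \left\vert b_{n}\right\vert +\frac{\left\vert \varphi ^{\prime }(0)\right\vert }{n}\sum_{k=1}^{n-2}\left( n-k\right) \left\vert b_{n-k}\right\vert +\frac{\left\vert \varphi ^{\prime }(0)\right\vert }{n}.
\]
Lemma \ref{lem3} bounds $\left\vert b_{m}\right\vert \leq \prod_{j=0}^{m-2}\left( j+\left\vert \psi ^{\prime }(0)\right\vert \right) /m!$, and the simplification $\left( n-k\right) /\left( n-k\right) !=1/\left( n-k-1\right) !$ converts $\left( n-k\right) \left\vert b_{n-k}\right\vert $ into $\prod_{j=0}^{n-k-2}\left( j+\left\vert \psi ^{\prime }(0)\right\vert \right) /\left( n-k-1\right) !$; substituting reproduces exactly the right-hand side of $\left( \ref{2.1}\right) $. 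I expect the only delicate points to be bookkeeping ones: correctly isolating the $k=0$ and $k=n-1$ contributions in the convolution (the latter being what supplies the free summand $\left\vert \varphi ^{\prime }(0)\right\vert /n$), and justifying the step $p\prec \varphi \Rightarrow p-1\prec \varphi -1$ so that Lemma \ref{lem2} applies with $\left\vert A_{1}\right\vert =\left\vert \varphi ^{\prime }(0)\right\vert $; the empty-sum convention handles the base case $n=2$ automatically.
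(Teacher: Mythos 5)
Your proposal is correct and follows essentially the same route as the paper: define $p=N/g^{\prime}$, bound its coefficients by $\left\vert \varphi ^{\prime }(0)\right\vert$ via Lemma \ref{lem2}, bound the $b_{n}$ via Lemma \ref{lem3}, and compare coefficients of $z^{n-1}$ in $N=p\,g^{\prime}$ (your isolated $p_{n-1}$ term is exactly the $k=n-1$ term of the paper's sum in $(\ref{2.12})$, since $b_{1}=1$). Your explicit justification that $p\prec \varphi \Rightarrow p-1\prec \varphi -1$ before invoking Lemma \ref{lem2} is a small point of rigor the paper glosses over, since the lemma as stated requires both functions to vanish at the origin.
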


\begin{proof}
Let the function $f\in \mathcal{K}_{\lambda ,\delta }\left( \varphi ,\psi
\right) $ be defined by $\left( \ref{1.1}\right) $. Therefore, by Definition
$\ref{def3}$, there exists a function%
\begin{equation}
g(z)=z+\sum_{n=2}^{\infty }b_{n}z^{n}\in \mathcal{K}\left( \psi \right)
,\;\psi \in \mathcal{M}  \label{2.2}
\end{equation}%
so that%
\begin{equation}
\frac{f^{\prime }\left( z\right) +\left( \lambda -\delta +2\lambda \delta
\right) zf^{\prime \prime }\left( z\right) +\lambda \delta z^{2}f^{\prime
\prime \prime }(z)}{g^{\prime }\left( z\right) }\prec \varphi \left(
z\right) \qquad \left( z\in \mathbb{D}\right) .  \label{2.3}
\end{equation}%
Note that by $\left( \ref{2.2}\right) $ and Lemma $\ref{lem3}$, we have%
\begin{equation}
\left\vert b_{n}\right\vert \leq \frac{\prod\limits_{j=0}^{n-2}\left(
j+\left\vert \psi ^{\prime }(0)\right\vert \right) }{n!}\qquad \left(
n=2,3,\ldots \right) .  \label{2.a}
\end{equation}%
Let us define the function $p(z)$ by%
\begin{equation}
p(z)=\frac{f^{\prime }\left( z\right) +\left( \lambda -\delta +2\lambda
\delta \right) zf^{\prime \prime }\left( z\right) +\lambda \delta
z^{2}f^{\prime \prime \prime }(z)}{g^{\prime }\left( z\right) }\qquad (z\in
\mathbb{D}).  \label{2.4}
\end{equation}%
Then according to $\left( \ref{2.3}\right) $ and $\left( \ref{2.4}\right) $,
we get%
\begin{equation}
p(z)\prec \varphi (z)\qquad (z\in \mathbb{D}).  \label{2.5}
\end{equation}%
Hence, using Lemma $\ref{lem2}$, we obtain%
\begin{equation}
\left\vert \frac{p^{\left( m\right) }\left( 0\right) }{m!}\right\vert
=\left\vert c_{m}\right\vert \leq \left\vert \varphi ^{\prime
}(0)\right\vert \qquad \left( m=1,2,\ldots \right) ,  \label{2.6}
\end{equation}%
where%
\begin{equation}
p(z)=1+c_{1}z+c_{2}z^{2}+\cdots \qquad (z\in \mathbb{D}).  \label{2.6.a}
\end{equation}%
Also from $\left( \ref{2.4}\right) $, we find%
\begin{equation}
f^{\prime }\left( z\right) +\left( \lambda -\delta +2\lambda \delta \right)
zf^{\prime \prime }\left( z\right) +\lambda \delta z^{2}f^{\prime \prime
\prime }(z)=p(z)g^{\prime }\left( z\right) .  \label{2.11}
\end{equation}%
Since $a_{1}=b_{1}=1$, in view of $\left( \ref{2.11}\right) $, we obtain%
\begin{eqnarray}
&&n\left[ 1+\left( n-1\right) \left( \lambda -\delta +2\lambda \delta
\right) +\left( n-1\right) \left( n-2\right) \lambda \delta \right]
a_{n}-nb_{n}  \notag \\
&=&c_{n-1}+2c_{n-2}b_{2}+\cdots +\left( n-1\right) c_{1}b_{n-1}  \notag \\
&=&\sum_{k=1}^{n-1}\left( n-k\right) c_{k}b_{n-k}\quad \left( n=2,3,\ldots
\right) .  \label{2.12}
\end{eqnarray}%
Now we get the desired result given in $\left( \ref{2.1}\right) $ by using $%
\left( \ref{2.a}\right) ,\left( \ref{2.6}\right) $ and $\left( \ref{2.12}%
\right) .$
\end{proof}

\begin{thm}
\label{thm2}Let $f\in \mathcal{S}_{\lambda ,\delta }\left( \varphi ,\psi
\right) $ and be of the form $(\ref{1.1})$, then%
\begin{eqnarray}
&&\left[ 1+\left( n-1\right) \left( \lambda -\delta +2\lambda \delta \right)
+\left( n-1\right) \left( n-2\right) \lambda \delta \right] \left\vert
a_{n}\right\vert  \notag \\
&\leq &\frac{\prod\limits_{j=0}^{n-2}\left( j+\left\vert \psi ^{\prime
}(0)\right\vert \right) }{\left( n-1\right) !}+\left\vert \varphi ^{\prime
}(0)\right\vert \left( 1+\sum_{k=1}^{n-2}\frac{\prod\limits_{j=0}^{n-k-2}%
\left( j+\left\vert \psi ^{\prime }(0)\right\vert \right) }{\left(
n-k-1\right) !}\right) \quad \left( n=2,3,\ldots \right) .  \label{3.1}
\end{eqnarray}
\end{thm}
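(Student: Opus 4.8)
The plan is to follow the proof of Theorem \ref{thm1} verbatim in structure, making two substitutions: the starlike coefficient estimate of Lemma \ref{lem4} replaces the convex estimate of Lemma \ref{lem3}, and the differentiation-induced factor of $n$ disappears because in Definition \ref{def4} the differential operator acts on $f$ itself rather than on $f^{\prime}$. First I would invoke Definition \ref{def4}: since $f\in\mathcal{S}_{\lambda,\delta}(\varphi,\psi)$, there is a function $g(z)=z+\sum_{n=2}^{\infty}b_{n}z^{n}\in\mathcal{S}^{\ast}(\psi)$ for which the quotient in $(\ref{def.4})$ is subordinate to $\varphi$. Because $g\in\mathcal{S}^{\ast}(\psi)$, Lemma \ref{lem4} gives
\begin{equation*}
\left\vert b_{n}\right\vert \leq \frac{\prod_{j=0}^{n-2}\left(j+\left\vert\psi^{\prime}(0)\right\vert\right)}{\left(n-1\right)!}\qquad\left(n=2,3,\ldots\right),
\end{equation*}
which is exactly the first term on the right-hand side of $(\ref{3.1})$ and accounts for the $(n-1)!$ denominator there, in contrast with the $n!$ of Theorem \ref{thm1}.

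Next I would set
\begin{equation*}
p(z)=\frac{\left(1-\lambda+\delta\right)f(z)+\left(\lambda-\delta\right)zf^{\prime}(z)+\lambda\delta z^{2}f^{\prime\prime}(z)}{g(z)},
\end{equation*}
so that $p\prec\varphi$ with $p(0)=1$; writing $p(z)=1+c_{1}z+c_{2}z^{2}+\cdots$ and applying Lemma \ref{lem2} yields $\left\vert c_{m}\right\vert\leq\left\vert\varphi^{\prime}(0)\right\vert$ for every $m\geq1$. Clearing the denominator produces the identity
\begin{equation*}
\left(1-\lambda+\delta\right)f(z)+\left(\lambda-\delta\right)zf^{\prime}(z)+\lambda\delta z^{2}f^{\prime\prime}(z)=p(z)\,g(z).
\end{equation*}

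The crucial step is the coefficient comparison. The left-hand side has $z^{n}$-coefficient equal to $\left[1+\left(n-1\right)\left(\lambda-\delta+2\lambda\delta\right)+\left(n-1\right)\left(n-2\right)\lambda\delta\right]a_{n}$, after verifying the algebraic identity $\left(1-\lambda+\delta\right)+\left(\lambda-\delta\right)n+\lambda\delta n(n-1)=1+(n-1)(\lambda-\delta+2\lambda\delta)+(n-1)(n-2)\lambda\delta$, while the Cauchy product on the right contributes $b_{n}+\sum_{k=1}^{n-1}c_{k}b_{n-k}$, using $a_{1}=b_{1}=1$. Equating and isolating $a_{n}$ gives
\begin{equation*}
\left[1+\left(n-1\right)\left(\lambda-\delta+2\lambda\delta\right)+\left(n-1\right)\left(n-2\right)\lambda\delta\right]a_{n}-b_{n}=\sum_{k=1}^{n-1}c_{k}b_{n-k}.
\end{equation*}

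Finally I would take absolute values, apply the triangle inequality, and insert the two coefficient bounds. Since $\left\vert c_{k}\right\vert\leq\left\vert\varphi^{\prime}(0)\right\vert$ and $\sum_{k=1}^{n-1}\left\vert b_{n-k}\right\vert=\sum_{m=1}^{n-1}\left\vert b_{m}\right\vert$, the convolution sum is bounded by $\left\vert\varphi^{\prime}(0)\right\vert\left(1+\sum_{m=2}^{n-1}\left\vert b_{m}\right\vert\right)$ once the $m=1$ term ($\left\vert b_{1}\right\vert=1$) is separated off; applying Lemma \ref{lem4} to the remaining $\left\vert b_{m}\right\vert$ and reindexing $m=n-k$ reproduces exactly the bracketed sum in $(\ref{3.1})$, and combining with the bound on $\left\vert b_{n}\right\vert$ yields $(\ref{3.1})$. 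I expect no genuine obstacle: the only delicate points are the index bookkeeping in the convolution and confirming that no extra factor of $n$ appears, which is precisely what distinguishes the denominators and the $\left\vert\varphi^{\prime}(0)\right\vert$-prefactor here from those in Theorem \ref{thm1}.
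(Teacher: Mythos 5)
Your proposal is correct and follows essentially the same route as the paper's own proof: invoke Definition \ref{def4} to produce $g\in\mathcal{S}^{\ast}(\psi)$, bound its coefficients via Lemma \ref{lem4}, apply Lemma \ref{lem2} to the subordinate quotient, clear the denominator, and compare $z^{n}$-coefficients to obtain $\bigl[1+(n-1)(\lambda-\delta+2\lambda\delta)+(n-1)(n-2)\lambda\delta\bigr]a_{n}-b_{n}=\sum_{k=1}^{n-1}c_{k}b_{n-k}$ before estimating with the triangle inequality. The only differences are cosmetic (the paper names the quotient $q$ with coefficients $d_{m}$, and leaves the algebraic identity for the $a_{n}$-coefficient and the final reindexing implicit, both of which you verify explicitly).
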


\begin{proof}
Let the function $f\in \mathcal{S}_{\lambda ,\delta }\left( \varphi ,\psi
\right) $ be defined by $\left( \ref{1.1}\right) $. Therefore, by Definition
$\ref{def4}$, there exists a function%
\begin{equation}
g(z)=z+\sum_{n=2}^{\infty }b_{n}z^{n}\in \mathcal{S}^{\ast }\left( \psi
\right) ,\;\psi \in \mathcal{M}  \label{3.a}
\end{equation}%
so that%
\begin{equation}
\frac{\left( 1-\lambda +\delta \right) f\left( z\right) +\left( \lambda
-\delta \right) zf^{\prime }\left( z\right) +\lambda \delta z^{2}f^{\prime
\prime }(z)}{g\left( z\right) }\prec \varphi \left( z\right) \qquad \left(
z\in \mathbb{D}\right) .  \label{3.2}
\end{equation}%
Note that by $\left( \ref{3.a}\right) $ and Lemma $\ref{lem4}$, we have%
\begin{equation}
\left\vert b_{n}\right\vert \leq \frac{\prod\limits_{j=0}^{n-2}\left(
j+\left\vert \psi ^{\prime }(0)\right\vert \right) }{\left( n-1\right) !}%
\qquad \left( n=2,3,\ldots \right) .  \label{3.3}
\end{equation}%
Let us define the function $q(z)$ by%
\begin{equation}
q(z)=\frac{\left( 1-\lambda +\delta \right) f\left( z\right) +\left( \lambda
-\delta \right) zf^{\prime }\left( z\right) +\lambda \delta z^{2}f^{\prime
\prime }(z)}{g\left( z\right) }\qquad (z\in \mathbb{D}).  \label{3.4}
\end{equation}%
Then according to $\left( \ref{3.2}\right) $ and $\left( \ref{3.4}\right) $,
we get%
\begin{equation}
q(z)\prec \varphi \left( z\right) \qquad \left( z\in \mathbb{D}\right) .
\label{3.5}
\end{equation}%
Hence, using Lemma $\ref{lem2}$, we obtain%
\begin{equation}
\left\vert \frac{q^{\left( m\right) }\left( 0\right) }{m!}\right\vert
=\left\vert d_{m}\right\vert \leq \left\vert \varphi ^{\prime
}(0)\right\vert \qquad \left( m=1,2,\ldots \right) ,  \label{3.6}
\end{equation}%
where%
\begin{equation}
q(z)=1+d_{1}z+d_{2}z^{2}+\cdots \qquad (z\in \mathbb{D}).  \label{3.7}
\end{equation}%
Also from $\left( \ref{3.4}\right) $, we find%
\begin{equation}
\left( 1-\lambda +\delta \right) f\left( z\right) +\left( \lambda -\delta
\right) zf^{\prime }\left( z\right) +\lambda \delta z^{2}f^{\prime \prime
}(z)=q(z)g\left( z\right) .  \label{3.8}
\end{equation}%
Since $a_{1}=b_{1}=1$, in view of $\left( \ref{3.8}\right) $, we obtain%
\begin{eqnarray}
&&\left[ 1-\lambda +\delta +n\left( \lambda -\delta \right) +n\left(
n-1\right) \lambda \delta \right] a_{n}-b_{n}  \notag \\
&=&c_{n-1}+c_{n-2}b_{2}+\cdots +c_{1}b_{n-1}  \notag \\
&=&\sum_{k=1}^{n-1}c_{k}b_{n-k}\qquad \left( n=2,3,\ldots \right) .
\label{3.9}
\end{eqnarray}%
Now we get the desired result given in $\left( \ref{3.1}\right) $ by using $%
\left( \ref{3.3}\right) ,\left( \ref{3.6}\right) $ and $\left( \ref{3.9}%
\right) .$
\end{proof}

\section{Corollaries and Consequences}

Letting $\delta =0$ and $\lambda =1$ in Theorem $\ref{thm1}$ and Theorem $%
\ref{thm2}$, we obtain the following consequences, respectively.

\begin{cor}
\label{cor.x}Let $f\in \mathcal{QK}\left( \varphi ,\psi \right) $ and be of
the form $(\ref{1.1})$, then%
\begin{equation*}
\left\vert a_{n}\right\vert \leq \frac{\prod\limits_{j=0}^{n-2}\left(
j+\left\vert \psi ^{\prime }(0)\right\vert \right) }{n^{2}\left( n-1\right) !%
}+\frac{\left\vert \varphi ^{\prime }(0)\right\vert }{n^{2}}\left(
1+\sum_{k=1}^{n-2}\frac{\prod\limits_{j=0}^{n-k-2}\left( j+\left\vert \psi
^{\prime }(0)\right\vert \right) }{\left( n-k-1\right) !}\right) \qquad
\left( n=2,3,\ldots \right) .
\end{equation*}
\end{cor}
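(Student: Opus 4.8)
The plan is to derive this corollary directly from Theorem \ref{thm1} by specializing the two parameters. By the Remark identifying $\mathcal{K}_{1,0}\left( \varphi ,\psi \right) =\mathcal{QK}\left( \varphi ,\psi \right) $, a function $f\in \mathcal{QK}\left( \varphi ,\psi \right) $ belongs to the class $\mathcal{K}_{\lambda ,\delta }\left( \varphi ,\psi \right) $ with $\lambda =1$ and $\delta =0$. Hence the inequality $\left( \ref{2.1}\right) $ applies verbatim once these parameter values are inserted, and the entire task reduces to simplifying both sides.

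First I would substitute $\delta =0$ and $\lambda =1$ into the bracketed coefficient multiplying $\left\vert a_{n}\right\vert $ on the left-hand side of $\left( \ref{2.1}\right) $. The term $\left( n-1\right) \left( \lambda -\delta +2\lambda \delta \right) $ reduces to $n-1$, while $\left( n-1\right) \left( n-2\right) \lambda \delta $ vanishes; thus the bracket collapses to $1+\left( n-1\right) =n$, and the left-hand side becomes $n\left\vert a_{n}\right\vert $. The right-hand side of $\left( \ref{2.1}\right) $ involves neither $\lambda $ nor $\delta $, so it is carried over unchanged.

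It then remains to divide the resulting inequality through by $n$. The only bookkeeping is to rewrite the denominator of the first term via the identity $n\cdot n!=n^{2}\left( n-1\right) !$, which turns $\frac{1}{n\cdot n!}$ into $\frac{1}{n^{2}\left( n-1\right) !}$ and simultaneously produces the prefactor $\frac{1}{n^{2}}$ in front of the summation term; this reproduces the claimed bound exactly. I do not expect any genuine obstacle here, since the corollary is a pure specialization of an already-proved theorem. The only points requiring slight care are the collapse of the left-hand coefficient to $n$ and the elementary denominator identity $n\cdot n!=n^{2}\left( n-1\right) !$, both of which are routine.
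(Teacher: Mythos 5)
Your proposal is correct and matches the paper's own derivation exactly: the paper obtains Corollary \ref{cor.x} precisely by setting $\delta=0$ and $\lambda=1$ in Theorem \ref{thm1}, using the identification $\mathcal{K}_{1,0}\left( \varphi ,\psi \right) =\mathcal{QK}\left( \varphi ,\psi \right)$, collapsing the bracket to $n$, and dividing through by $n$ with $n\cdot n!=n^{2}\left( n-1\right)!$.
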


\begin{cor}
\label{cor.a}Let $f\in \mathcal{C}\left( \varphi ,\psi \right) $ and be of
the form $(\ref{1.1})$, then%
\begin{equation*}
\left\vert a_{n}\right\vert \leq \frac{\prod\limits_{j=0}^{n-2}\left(
j+\left\vert \psi ^{\prime }(0)\right\vert \right) }{n!}+\frac{\left\vert
\varphi ^{\prime }(0)\right\vert }{n}\left( 1+\sum_{k=1}^{n-2}\frac{%
\prod\limits_{j=0}^{n-k-2}\left( j+\left\vert \psi ^{\prime }(0)\right\vert
\right) }{\left( n-k-1\right) !}\right) \qquad \left( n=2,3,\ldots \right) .
\end{equation*}
\end{cor}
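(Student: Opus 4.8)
The plan is to obtain this bound as a direct specialization of Theorem \ref{thm2}, exactly as the sentence introducing the corollary indicates. First I would invoke the relation $\mathcal{S}_{1,0}(\varphi,\psi)=\mathcal{C}(\varphi,\psi)$ recorded in the remark following Definition \ref{def4}: a function $f\in\mathcal{C}(\varphi,\psi)$ is precisely a function in $\mathcal{S}_{\lambda,\delta}(\varphi,\psi)$ under the choice $\lambda=1$ and $\delta=0$. Consequently the whole task reduces to substituting these two parameter values into the conclusion \eqref{3.1} of Theorem \ref{thm2}, whose hypotheses $0\le\delta\le\lambda\le1$ are met by this choice.

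The only computation requiring care is the coefficient of $|a_n|$ on the left of \eqref{3.1}. Setting $\lambda=1$ and $\delta=0$ gives $\lambda-\delta+2\lambda\delta=1$ and $\lambda\delta=0$, so the bracketed factor
\[
1+(n-1)(\lambda-\delta+2\lambda\delta)+(n-1)(n-2)\lambda\delta
\]
collapses to $1+(n-1)=n$; in particular the quadratic-in-$n$ contribution vanishes. Since the right-hand side of \eqref{3.1} involves neither $\lambda$ nor $\delta$, it is carried over unchanged, and Theorem \ref{thm2} then yields
\[
n\,|a_n|\le\frac{\prod_{j=0}^{n-2}\bigl(j+|\psi'(0)|\bigr)}{(n-1)!}+|\varphi'(0)|\left(1+\sum_{k=1}^{n-2}\frac{\prod_{j=0}^{n-k-2}\bigl(j+|\psi'(0)|\bigr)}{(n-k-1)!}\right).
\]

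To conclude I would divide through by $n$ and rewrite $n\,(n-1)!=n!$ in the denominator of the first term, which reproduces the asserted inequality verbatim. I expect no genuine obstacle here: the argument is purely a substitution into an already-established theorem, and the single point to verify is the algebra of the left-hand coefficient, namely that with $\lambda=1$, $\delta=0$ the bracketed factor reduces cleanly to $n$.
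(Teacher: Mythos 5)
Your proposal is correct and is precisely the paper's own derivation: the paper obtains Corollary \ref{cor.a} by setting $\delta=0$, $\lambda=1$ in Theorem \ref{thm2}, using the identification $\mathcal{S}_{1,0}(\varphi,\psi)=\mathcal{C}(\varphi,\psi)$, so the bracketed coefficient reduces to $n$ and dividing by $n$ (with $n\,(n-1)!=n!$) gives the stated bound. Nothing is missing; your verification of the left-hand coefficient algebra is the only step that needed checking, and it is done correctly.
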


Letting $\delta =0$ and $\lambda =0$ in Theorem $\ref{thm2}$, we obtain the
following consequence.

\begin{cor}
Let $f\in \mathcal{CS}\left( \varphi ,\psi \right) $ and be of the form $(%
\ref{1.1})$, then%
\begin{equation*}
\left\vert a_{n}\right\vert \leq \frac{\prod\limits_{j=0}^{n-2}\left(
j+\left\vert \psi ^{\prime }(0)\right\vert \right) }{\left( n-1\right) !}%
+\left\vert \varphi ^{\prime }(0)\right\vert \left( 1+\sum_{k=1}^{n-2}\frac{%
\prod\limits_{j=0}^{n-k-2}\left( j+\left\vert \psi ^{\prime }(0)\right\vert
\right) }{\left( n-k-1\right) !}\right) \qquad \left( n=2,3,\ldots \right) .
\end{equation*}
\end{cor}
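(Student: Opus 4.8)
The plan is to obtain this corollary as the special case $\lambda=\delta=0$ of Theorem $\ref{thm2}$, so that essentially no new work is required beyond a substitution. First I would invoke the remark preceding the main theorems, which records that
\[
\mathcal{S}_{0,0}\left( \varphi ,\psi \right) =\mathcal{CS}\left( \varphi ,\psi \right) .
\]
Thus the hypothesis $f\in \mathcal{CS}\left( \varphi ,\psi \right) $ coincides with $f\in \mathcal{S}_{0,0}\left( \varphi ,\psi \right) $, placing $f$ within the scope of Theorem $\ref{thm2}$ at the parameter values $\lambda=\delta=0$. To be safe, I would also verify this directly at the level of Definition $\ref{def4}$: setting $\lambda=\delta=0$ turns the numerator $\left( 1-\lambda +\delta \right) f\left( z\right) +\left( \lambda -\delta \right) zf^{\prime }\left( z\right) +\lambda \delta z^{2}f^{\prime \prime }(z)$ into simply $f(z)$, recovering the subordination $f(z)/g(z)\prec \varphi (z)$ with $g\in \mathcal{S}^{\ast }\left( \psi \right) $ that defines $\mathcal{CS}\left( \varphi ,\psi \right) $ in $\left( \ref{1.d}\right) $.

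The only computation is to evaluate the bracketed coefficient on the left-hand side of $\left( \ref{3.1}\right) $ at $\lambda=\delta=0$. Since then $\lambda -\delta +2\lambda \delta =0$ and $\lambda \delta =0$, the factor
\[
1+\left( n-1\right) \left( \lambda -\delta +2\lambda \delta \right) +\left( n-1\right) \left( n-2\right) \lambda \delta
\]
collapses to $1$. The right-hand side of $\left( \ref{3.1}\right) $ depends on neither $\lambda $ nor $\delta $, so it is carried over verbatim; dividing through by the now-trivial factor $1$ yields precisely the asserted bound on $\left\vert a_{n}\right\vert $.

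I do not anticipate any genuine obstacle, as the substantive content lies entirely in Theorem $\ref{thm2}$ and this corollary merely reads off the degenerate parameter values. The one point worth a moment's care is confirming that the specialization of the \emph{class} (via the remark) and the specialization of the \emph{inequality} (via the bracket collapsing to $1$) are mutually consistent, which the check against Definition $\ref{def4}$ above guarantees.
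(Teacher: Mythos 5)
Your proposal is correct and follows exactly the paper's route: the paper obtains this corollary by setting $\delta =0$ and $\lambda =0$ in Theorem \ref{thm2}, using the identification $\mathcal{S}_{0,0}\left( \varphi ,\psi \right) =\mathcal{CS}\left( \varphi ,\psi \right) $ from the earlier remark, just as you do. Your additional check against Definition \ref{def4} (that the numerator collapses to $f(z)$, matching \eqref{1.d}) is a sensible verification but adds nothing beyond what the paper's one-line specialization already contains.
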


If we choose%
\begin{equation*}
\varphi \left( z\right) =\frac{1+\left( 1-2\alpha \right) z}{1-z}\quad
\left( 0\leq \alpha <1\right) \quad \text{and}\quad \psi \left( z\right) =%
\frac{1+\left( 1-2\beta \right) z}{1-z}\quad \left( 0\leq \beta <1\right)
\end{equation*}%
in Corollary $\ref{cor.a}$, then we get following consequence.

\begin{cor}
\cite{L} Let $f\in \mathcal{C}(\alpha ,\beta )\,\left( 0\leq \alpha ,\beta
<1\right) $ and be of the form $(\ref{1.1})$, then%
\begin{equation*}
\left\vert a_{n}\right\vert \leq \frac{2\left( 3-2\beta \right) \left(
4-2\beta \right) \cdots \left( n-2\beta \right) }{n!}\left[ n\left( 1-\alpha
\right) +\left( \alpha -\beta \right) \right] \qquad \left( n=2,3,\ldots
\right) .
\end{equation*}
\end{cor}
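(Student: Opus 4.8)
The plan is to derive this corollary directly from Corollary~\ref{cor.a} by computing the two leading coefficients $\varphi'(0)$ and $\psi'(0)$ for the prescribed M\"obius-type functions and then collapsing the resulting finite sum into a single product. First I would expand
\begin{equation*}
\varphi(z)=\frac{1+(1-2\alpha)z}{1-z}=1+2(1-\alpha)z+\cdots,\qquad \psi(z)=\frac{1+(1-2\beta)z}{1-z}=1+2(1-\beta)z+\cdots,
\end{equation*}
so that $\left\vert\varphi'(0)\right\vert=2(1-\alpha)$ and $\left\vert\psi'(0)\right\vert=2(1-\beta)$. Substituting $\left\vert\psi'(0)\right\vert=2-2\beta$ into the product in Corollary~\ref{cor.a} gives $\prod_{j=0}^{n-2}\left(j+2-2\beta\right)=(2-2\beta)(3-2\beta)\cdots(n-2\beta)$, which already exhibits the factors $(3-2\beta)(4-2\beta)\cdots(n-2\beta)$ appearing in the claimed bound.

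The core of the argument is the evaluation of the finite sum
\begin{equation*}
S=1+\sum_{k=1}^{n-2}\frac{\prod_{j=0}^{n-k-2}\left(j+2-2\beta\right)}{\left(n-k-1\right)!}.
\end{equation*}
Reindexing by $m=n-k$ turns this into $S=\sum_{m=1}^{n-1}T_m$, where $T_m=(2-2\beta)(3-2\beta)\cdots(m-2\beta)/(m-1)!$ for $m\geq 2$ and $T_1=1$ as an empty product. I would then prove by induction on $N$ the closed form
\begin{equation*}
\sum_{m=1}^{N}T_m=\frac{(3-2\beta)(4-2\beta)\cdots\left((N+1)-2\beta\right)}{(N-1)!},
\end{equation*}
the inductive step reducing, after factoring out the common product $(3-2\beta)\cdots\left((N+1)-2\beta\right)$, to the elementary identity $\frac{N}{N!}+\frac{2-2\beta}{N!}=\frac{N+2-2\beta}{N!}$. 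Taking $N=n-1$ yields $S=(3-2\beta)(4-2\beta)\cdots(n-2\beta)/(n-2)!$.

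Finally I would assemble the two pieces. Writing $R=(3-2\beta)(4-2\beta)\cdots(n-2\beta)$, the first term of Corollary~\ref{cor.a} contributes $(2-2\beta)R/n!$ while the second contributes $\frac{2(1-\alpha)}{n}\,S=\frac{2(1-\alpha)R}{n\cdot(n-2)!}$. Using $n\cdot(n-2)!=n!/(n-1)$ to bring both terms over the common denominator $n!$ and factoring out $2R/n!$, the surviving bracket is $(1-\beta)+(n-1)(1-\alpha)=n(1-\alpha)+(\alpha-\beta)$, which produces exactly the asserted estimate. The only nontrivial step is the summation identity for $S$; the computation of $\varphi'(0),\psi'(0)$ and the final algebraic consolidation are pure bookkeeping, so I expect the inductive evaluation of $S$ to be the main obstacle.
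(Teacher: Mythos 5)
Your proposal is correct and follows exactly the route the paper intends: the paper obtains this corollary by substituting $\varphi(z)=\frac{1+(1-2\alpha)z}{1-z}$ and $\psi(z)=\frac{1+(1-2\beta)z}{1-z}$ into Corollary~\ref{cor.a}, which is precisely your specialization, and your values $\left\vert\varphi'(0)\right\vert=2(1-\alpha)$, $\left\vert\psi'(0)\right\vert=2(1-\beta)$, the inductive evaluation of the sum $S$, and the final consolidation to $n(1-\alpha)+(\alpha-\beta)$ all check out (including the empty-product edge case $n=2$). The paper leaves these computations implicit, so your write-up simply supplies the details of the same argument.
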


Letting%
\begin{equation*}
\delta =0,\quad \varphi \left( z\right) =\frac{1+Az}{1+Bz}\quad \left(
-1\leq B<A\leq 1\right) ,\quad \psi \left( z\right) =\frac{1+z}{1-z}
\end{equation*}%
in Theorem $\ref{thm1}$ and Theorem $\ref{thm2}$, we obtain the following
consequences, respectively.

\begin{cor}
\label{cor1}Let $f\in \mathcal{Q}_{\mathcal{CV}}\left( \lambda ,A,B\right) $
and be of the form $(\ref{1.1})$, then%
\begin{equation*}
\left\vert a_{n}\right\vert \leq \frac{1}{1+\left( n-1\right) \lambda }%
\left( 1+\frac{\left( n-1\right) \left( A-B\right) }{2}\right) \qquad \left(
n=2,3,\ldots \right) .
\end{equation*}
\end{cor}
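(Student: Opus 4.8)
The plan is to obtain this purely as a specialization of Theorem~\ref{thm1} to the parameters $\delta=0$, $\varphi(z)=\frac{1+Az}{1+Bz}$ and $\psi(z)=\frac{1+z}{1-z}$, and then to simplify the general bound $(\ref{2.1})$. First I would record the two derivative data at the origin that feed into $(\ref{2.1})$. For $\psi(z)=\frac{1+z}{1-z}$ a direct differentiation gives $\psi'(z)=\frac{2}{(1-z)^2}$, so that $|\psi'(0)|=2$; for $\varphi(z)=\frac{1+Az}{1+Bz}$ one finds $\varphi'(z)=\frac{A-B}{(1+Bz)^2}$, whence $|\varphi'(0)|=A-B$ (the modulus being harmless since $A>B$).

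Next I would handle the left-hand side of $(\ref{2.1})$: putting $\delta=0$ collapses the bracket $1+(n-1)(\lambda-\delta+2\lambda\delta)+(n-1)(n-2)\lambda\delta$ to $1+(n-1)\lambda$, which is exactly the denominator appearing in the claim. The substantive step is then to evaluate the two products on the right-hand side using $|\psi'(0)|=2$. Since $\prod_{j=0}^{N}(j+2)=2\cdot3\cdots(N+2)=(N+2)!$, taking $N=n-2$ yields $\prod_{j=0}^{n-2}(j+2)=n!$, so the leading term $\frac{\prod_{j=0}^{n-2}(j+2)}{n!}$ reduces to $1$; taking $N=n-k-2$ yields $\prod_{j=0}^{n-k-2}(j+2)=(n-k)!$, so each summand simplifies to $\frac{(n-k)!}{(n-k-1)!}=n-k$.

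It then remains to sum the resulting arithmetic progression $\sum_{k=1}^{n-2}(n-k)=(n-1)+(n-2)+\cdots+2=\frac{n(n-1)}{2}-1$, so that the whole parenthesized factor becomes $1+\sum_{k=1}^{n-2}(n-k)=\frac{n(n-1)}{2}$. Inserting this together with $|\varphi'(0)|=A-B$ into $(\ref{2.1})$ gives the right-hand side $1+\frac{A-B}{n}\cdot\frac{n(n-1)}{2}=1+\frac{(n-1)(A-B)}{2}$, and dividing through by $1+(n-1)\lambda$ produces exactly the stated bound. I do not expect a genuine obstacle here, since the argument is a pure specialization; the only point requiring care is the index bookkeeping in the telescoping of the products, where one must confirm that the ranges $j=0,\dots,n-2$ and $j=0,\dots,n-k-2$ indeed produce $n!$ and $(n-k)!$ respectively, and that the empty sum at $n=2$ is handled correctly so that the formula $1+\sum_{k=1}^{n-2}(n-k)=\frac{n(n-1)}{2}$ holds for all $n\geq2$.
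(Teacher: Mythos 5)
Your proposal is correct and takes essentially the same route as the paper: Corollary \ref{cor1} is obtained there precisely by setting $\delta=0$, $\varphi(z)=\frac{1+Az}{1+Bz}$, $\psi(z)=\frac{1+z}{1-z}$ in Theorem \ref{thm1}. Your computation of $|\psi'(0)|=2$, $|\varphi'(0)|=A-B$, the telescoping of the products to $n!$ and $(n-k)!$, and the evaluation $1+\sum_{k=1}^{n-2}(n-k)=\frac{n(n-1)}{2}$ simply makes explicit the arithmetic the paper leaves to the reader.
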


\begin{cor}
\label{cor2}Let $f\in \mathcal{Q}_{\mathcal{ST}}\left( \lambda ,A,B\right) $
and be of the form $(\ref{1.1})$, then%
\begin{equation*}
\left\vert a_{n}\right\vert \leq \frac{n}{1+\left( n-1\right) \lambda }%
\left( 1+\frac{\left( n-1\right) \left( A-B\right) }{2}\right) \qquad \left(
n=2,3,\ldots \right) .
\end{equation*}
\end{cor}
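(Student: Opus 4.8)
The plan is to specialize Theorem~\ref{thm2} to the indicated data and simplify. First I would substitute $\delta=0$ into the bracketed coefficient on the left-hand side of $(\ref{3.1})$. Since $\lambda-\delta+2\lambda\delta=\lambda$ and the term $(n-1)(n-2)\lambda\delta$ vanishes, that coefficient collapses to $1+(n-1)\lambda$, which is exactly the denominator appearing in the claimed bound.

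Next I would record the two pieces of derivative data that $(\ref{3.1})$ requires. For $\varphi(z)=\frac{1+Az}{1+Bz}$ a direct differentiation gives $\varphi'(z)=\frac{A-B}{(1+Bz)^{2}}$, so $|\varphi'(0)|=A-B$ (using $-1\le B<A\le 1$, this quantity is positive). For $\psi(z)=\frac{1+z}{1-z}$ one computes $\psi'(z)=\frac{2}{(1-z)^{2}}$, hence $|\psi'(0)|=2$. These are the only inputs from $\mathcal{N}$ that the general theorem uses.

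The key computational step is then to evaluate the products in $(\ref{3.1})$ at $|\psi'(0)|=2$. Because $\prod_{j=0}^{m}(j+2)=2\cdot 3\cdots(m+2)=(m+2)!$, I obtain $\prod_{j=0}^{n-2}(j+2)=n!$ and $\prod_{j=0}^{n-k-2}(j+2)=(n-k)!$. Dividing by the factorials indicated in $(\ref{3.1})$ telescopes each term: $\frac{n!}{(n-1)!}=n$ and $\frac{(n-k)!}{(n-k-1)!}=n-k$. Thus the first summand on the right of $(\ref{3.1})$ reduces to $n$, and the parenthesized factor multiplying $|\varphi'(0)|$ becomes $1+\sum_{k=1}^{n-2}(n-k)$.

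Finally I would evaluate the arithmetic sum. As $k$ runs from $1$ to $n-2$ the quantity $n-k$ runs over $2,3,\dots,n-1$, so $1+\sum_{k=1}^{n-2}(n-k)=1+\bigl(\tfrac{n(n-1)}{2}-1\bigr)=\frac{n(n-1)}{2}$ (and the degenerate case $n=2$, where the sum is empty, gives $1=\tfrac{2\cdot 1}{2}$, consistent with this formula). The right-hand side of $(\ref{3.1})$ therefore equals $n+(A-B)\cdot\frac{n(n-1)}{2}=n\bigl(1+\frac{(n-1)(A-B)}{2}\bigr)$, and dividing through by $1+(n-1)\lambda$ yields the asserted inequality. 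The argument is a routine specialization with no genuine obstacle; the only points demanding care are the factorial bookkeeping in the product simplification and the off-by-one in the summation range, both of which I would verify explicitly.
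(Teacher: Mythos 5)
Your proposal is correct and is exactly the paper's route: the paper obtains Corollary~\ref{cor2} by specializing Theorem~\ref{thm2} with $\delta=0$, $\varphi(z)=\frac{1+Az}{1+Bz}$, $\psi(z)=\frac{1+z}{1-z}$, which gives $|\varphi'(0)|=A-B$, $|\psi'(0)|=2$, and the same factorial and summation simplifications you carried out. Your write-up simply makes explicit the bookkeeping the paper leaves to the reader.
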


\begin{remark}
It is clear that%
\begin{equation*}
1+\frac{\left( n-1\right) \left( A-B\right) }{2}\leq 1+\frac{\left(
n-1\right) \left( A-B\right) }{1-B}\quad \left( -1\leq B<A\leq
1,\;n=2,3,\ldots \right) ,
\end{equation*}%
which would obviously yield significant improvements of Theorem $\ref{thm.A}$
and Theorem $\ref{thm.B}$.
\end{remark}

Letting%
\begin{equation*}
\lambda =0,\quad A=1,\quad B=-1
\end{equation*}%
in Corollary $\ref{cor1}$ and Corollary $\ref{cor2}$, we have following
consequences, respectively.

\begin{cor}
\cite{Reade} Let $f\in \mathcal{C}$ and be of the form $(\ref{1.1})$, then%
\begin{equation*}
\left\vert a_{n}\right\vert \leq n\qquad \left( n=2,3,\ldots \right) .
\end{equation*}
\end{cor}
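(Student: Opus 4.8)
The plan is to recover the familiar class $\mathcal{C}$ of close-to-convex functions as a special parameter value of the Altıntaş--Kılıç class $\mathcal{Q}_{\mathcal{CV}}\left( \lambda ,A,B\right) $ and then simply read off the bound from Corollary~\ref{cor1}. First I would verify the class identity $\mathcal{Q}_{\mathcal{CV}}\left( 0,1,-1\right) =\mathcal{C}$. Setting $\lambda =0$ in the membership condition for $\mathcal{Q}_{\mathcal{CV}}\left( \lambda ,A,B\right) $ collapses the operator $f^{\prime }\left( z\right) +\lambda zf^{\prime \prime }\left( z\right) $ to $f^{\prime }\left( z\right) $, so the defining subordination becomes $f^{\prime }\left( z\right) /g^{\prime }\left( z\right) \prec \left( 1+Az\right) /\left( 1+Bz\right) $ with $g\in \mathcal{K}$; taking $A=1$ and $B=-1$ turns $\left( 1+Az\right) /\left( 1+Bz\right) $ into $\left( 1+z\right) /\left( 1-z\right) $, which is precisely the defining subordination of $\mathcal{C}$. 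Hence every $f\in \mathcal{C}$ satisfies the hypotheses of Corollary~\ref{cor1} for these parameter values.

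Second, I would substitute $\lambda =0$, $A=1$, $B=-1$ directly into the bound $\left\vert a_{n}\right\vert \leq \frac{1}{1+\left( n-1\right) \lambda }\left( 1+\frac{\left( n-1\right) \left( A-B\right) }{2}\right) $ furnished by Corollary~\ref{cor1}. The prefactor reduces to $1/\left( 1+0\right) =1$, and since $A-B=1-\left( -1\right) =2$ the bracketed term becomes $1+\frac{\left( n-1\right) \cdot 2}{2}=1+\left( n-1\right) =n$. This yields $\left\vert a_{n}\right\vert \leq n$ for all $n=2,3,\ldots $, as claimed.

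There is no genuine obstacle here: the statement is a clean specialization, and the only care required is in correctly matching the subordinate function $\left( 1+z\right) /\left( 1-z\right) $ to the parameter pair $\left( A,B\right) =\left( 1,-1\right) $ and in carrying out the two elementary simplifications above. As a consistency check, the resulting bound $\left\vert a_{n}\right\vert \leq n$ is sharp, being attained by the Koebe function $z/\left( 1-z\right) ^{2}=\sum_{n\geq 1}nz^{n}$, which is starlike and therefore close-to-convex; this confirms that nothing is lost in passing to the special case.
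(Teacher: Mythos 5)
Your proposal is correct and follows exactly the paper's route: the paper likewise obtains this corollary by setting $\lambda =0$, $A=1$, $B=-1$ in Corollary~\ref{cor1}, and your simplifications (prefactor $1$, bracket $1+(n-1)=n$) match. The class identification $\mathcal{Q}_{\mathcal{CV}}\left( 0,1,-1\right) =\mathcal{C}$ and the sharpness check via the Koebe function are sound additions but not departures from the paper's argument.
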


\begin{cor}
\cite{Reade} Let $f\in \mathcal{CS}$ and be of the form $(\ref{1.1})$, then%
\begin{equation*}
\left\vert a_{n}\right\vert \leq n^{2}\qquad \left( n=2,3,\ldots \right) .
\end{equation*}
\end{cor}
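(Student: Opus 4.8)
The plan is to obtain this estimate as an immediate specialization of Corollary \ref{cor2}, which supplies the coefficient bound for the Janowski-type class $\mathcal{Q}_{\mathcal{ST}}(\lambda,A,B)$. First I would check that the hypotheses align. Setting $\lambda = 0$ in the defining subordination of $\mathcal{Q}_{\mathcal{ST}}(\lambda,A,B)$ collapses the left-hand side $\frac{(1-\lambda)f(z)+\lambda z f^{\prime}(z)}{g(z)}$ to $\frac{f(z)}{g(z)}$, while the choices $A=1$ and $B=-1$ give $\frac{1+Az}{1+Bz}=\frac{1+z}{1-z}$. Since $\mathcal{Q}_{\mathcal{ST}}$ already carries $\psi(z)=\frac{1+z}{1-z}$, the subordinating function $g$ ranges over the usual starlike class $\mathcal{S}^{\ast}$. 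Thus the condition becomes $\frac{f(z)}{g(z)}\prec\frac{1+z}{1-z}$ with $g\in\mathcal{S}^{\ast}$, which is exactly the defining property of the close-to-starlike class $\mathcal{CS}$; hence $f\in\mathcal{CS}$ precisely when $f\in\mathcal{Q}_{\mathcal{ST}}(0,1,-1)$.

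With this identification secured, I would substitute $\lambda=0$, $A=1$, $B=-1$ directly into the bound of Corollary \ref{cor2}. The prefactor $\frac{n}{1+(n-1)\lambda}$ reduces to $n$, and the bracketed factor becomes $1+\frac{(n-1)(A-B)}{2}=1+\frac{(n-1)\cdot 2}{2}=n$. Multiplying the two gives $n\cdot n=n^{2}$, which is the asserted estimate.

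Because this is a routine parameter substitution, I anticipate no genuine obstacle. The only points demanding attention are confirming that the triple $(\lambda,A,B)=(0,1,-1)$ genuinely reduces the comprehensive class $\mathcal{S}_{\lambda,\delta}(\varphi,\psi)$ to $\mathcal{CS}$, and verifying the elementary arithmetic $\frac{A-B}{2}=1$ that makes the bracketed factor telescope to $n$.
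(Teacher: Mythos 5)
Your proposal is correct and follows exactly the paper's own route: the paper obtains this corollary by setting $\lambda=0$, $A=1$, $B=-1$ in Corollary \ref{cor2}, precisely as you do, and your class identification $\mathcal{Q}_{\mathcal{ST}}(0,1,-1)=\mathcal{CS}$ together with the arithmetic $1+\frac{(n-1)(A-B)}{2}=n$ is exactly what the paper's specialization requires.
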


Letting%
\begin{equation*}
\lambda =1,\quad A=1,\quad B=-1
\end{equation*}%
in Corollary $\ref{cor1}$, we have following consequence.

\begin{cor}
\cite{N} Let $f\in \mathcal{Q}$ and be of the form $(\ref{1.1})$, then%
\begin{equation*}
\left\vert a_{n}\right\vert \leq 1\qquad \left( n=2,3,\ldots \right) .
\end{equation*}
\end{cor}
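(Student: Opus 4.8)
The plan is to derive this bound as an immediate specialization of Corollary~\ref{cor1}, since the quasi-convex class $\mathcal{Q}$ sits inside the Janowski-type family $\mathcal{Q}_{\mathcal{CV}}(\lambda,A,B)$ for a specific choice of parameters. First I would identify that choice. Recall that $\mathcal{Q}$ arises from $\mathcal{QK}(\varphi,\psi)$ by taking $\varphi(z)=\psi(z)=\frac{1+z}{1-z}$, and that in the notation of Corollary~\ref{cor1} this corresponds precisely to $\lambda=1$, $A=1$, $B=-1$: with these values $\frac{1+Az}{1+Bz}=\frac{1+z}{1-z}$, while $\psi$ is already fixed as $\frac{1+z}{1-z}$ in the definition of $\mathcal{Q}_{\mathcal{CV}}(\lambda,A,B)$.

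The second step is the substitution itself. Plugging $\lambda=1$ into the denominator of the bound gives $1+(n-1)\lambda=1+(n-1)=n$. For the factor in parentheses, $A-B=1-(-1)=2$, so that $1+\frac{(n-1)(A-B)}{2}=1+(n-1)=n$. Hence the entire right-hand side of Corollary~\ref{cor1} collapses to $\frac{1}{n}\cdot n=1$, which yields $\left\vert a_{n}\right\vert\leq 1$ for every $n=2,3,\ldots$, exactly the asserted bound.

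I do not expect any genuine obstacle here: the argument is a pure specialization, requiring only the correct identification of the three parameters and an elementary cancellation. The single point worth stating explicitly is that the two factors of $n$—one produced in the denominator by $\lambda=1$ and one produced in the bracket by $A-B=2$—cancel precisely, and it is this cancellation that forces the clean, dimension-free bound $\left\vert a_{n}\right\vert\leq 1$ independent of $n$.
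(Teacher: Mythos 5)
Your proposal is correct and follows exactly the paper's route: the paper also obtains this corollary by setting $\lambda=1$, $A=1$, $B=-1$ in Corollary~\ref{cor1}, whereupon the denominator $1+(n-1)\lambda=n$ and the bracket $1+\frac{(n-1)(A-B)}{2}=n$ cancel to give $\left\vert a_{n}\right\vert\leq 1$. Your identification of $\mathcal{Q}$ as the case $\varphi(z)=\psi(z)=\frac{1+z}{1-z}$ of $\mathcal{Q}_{\mathcal{CV}}(\lambda,A,B)$ with these parameters matches the paper's remarks as well.
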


\end{document}